\documentclass[12pt, reqno]{amsart}

\usepackage[utf8]{inputenc}
\usepackage[T1]{fontenc}
\usepackage{tikz,tikz-3dplot}
\usepackage{amsmath,amsfonts,amssymb,amsmath,latexsym,mathrsfs}
\usepackage{shuffle}
\usepackage[all]{xy}
\usepackage{stackengine}
 
\usepackage{enumerate}

\usepackage{hyperref}
 
\usepackage{tabu}
\usepackage{tikz-cd} 

\usepackage{comment}
\usepackage{url}
\usepackage{tikz}

\usepackage{xfrac}

\newtheorem{theorem}{Theorem}[section]

\newtheorem{proposition}[theorem]{Proposition}

\begin{document}

\title{Symmetric 2-cocycles with values in $\mathbb{C}^\times$}
\keywords{2-cocycles, groups, GAP}

\author{Mohamad \textsc{Maassarani} }
\maketitle
\begin{abstract} For many finite groups a symmetric $2$-cocycle $\alpha$ ($\alpha(g,h)=\alpha(h,g)$, for all pairs $(h,g)$ of the group) with values in $\mathbb{C}^\times$ is a coboundary. We show using a theoretic arguement and GAP that there is a group of order $64$ having a symmetric $2$-cocycle with a non trivial cohomology class.
\end{abstract}
\section{Introduction and main result}
We consider $2$-cocycles over finite groups with values in $\mathbb{C}^\times$. We will say that such cocycle $\alpha$ is symmetric if $\alpha(g,h)=\alpha(h,g)$ for all pairs of elements $(g,h)$ of the group. It is know that a symmetric $2$-cocycles with values in $\mathbb{C}^\times$ of an abelian group is a coboundary. For $G$ a finite group, the subgroup $H_S^2(G,\mathbb{C}^\times)$ of the Schur multiplier $H^2(G,\mathbb{C}^\times)$ consisting of classes of symmetric cocycles can be identified with a subgroup of the Bogomolov multiplier $B_0(G)$ (\cite{Cq}). In particular for a finite group with trivial Bogomolov multiplier all symmetric 2-cocycles with values in $\mathbb{C}^\times$ are coboundaries. According to \cite{Bog}, all simple groups has trivial Bogomolov multiplier and hence trivial $H_S^2(G,\mathbb{C}^\times)$. It is knonw that groups of order strictely less then $64$ have trivial Bogolomov Multiplier. The group $H_S^2(G,\mathbb{C}^\times)$ is also trivial for $G$ a Schur cover (see for instance \cite{Qrep},\cite{Cq}). In \cite{leb}, $\bar{C}$-groups are defined as groups defined by generators and relations and where the only relations are relations of the form $aba^{-1}=c$ and $d^k=1$ for $a,b,c,d$ generators. It is shown that for a $\bar{C}$-group $G$ there is an injective map between the enveloping group $A(G)$ of the conjugacy quandle of $G$ and $G\times \mathbb{Z}^{c_G}$ where $c_G$ is the number of conjugacy classe of $G$. In \cite{Cq}, we prove that for a finite group this map is injective if and only if $H_S^2(G,\mathbb{C}^\times)=0$. This implies that for a finite $\bar{C}$-group all symmetric $2$-cocycles with values in $\mathbb{C}^\times$ are coboundaries.\\\\
In \cite{Cq}, we proved that for $G$ finite $H_S^2(G,\mathbb{C}^\times)$ is zero if and only if the derived group of $G$ and the derived group of the enveloping group $A(G)$ of the conjugacy quandle of $G$ are isomorphic. We show that for $G$ corresponding to the group $SmallGroup(64,149)$ of order $64$ of GAP, the derived group of $A(G)$ has more elements then the derived group of $G$, proving that $H^2_S(G,\mathbb{C}^\times)$ is non trivial and hence that $SmallGroup(64,149)$ posses a symmetric $2$-cocycle with values in $\mathbb{C}^\times$ with a non trivial cohomology class (not a coboundary). To establish the result we use GAP 4.15.1. The code executes in seconds on a standard Hardware. For instance, the existence of such cocycle proves that the conujugacy quandle of $SmallGroup(64,149)$ posses an irreducible quandle representation that can't be obtained as product of a quandle character and a linear representation of the group \cite{Cq}. 
 \section{Existence of the $2$-cocyle}
Let $G$ be a group. The enveloping group of the conjugacy quandle of $G$ is the group :
$$ A(G)=\langle e_g,\: g\in G \vert e_ge_he_g^{-1}=e_{ghg^{-1}} \: \text{for} \: g,h\in G \rangle.$$

As in the introduction, we will denote $H_S^2(G,\mathbb{C}^\times)$ the subgroup of the Schur multiplier $H^2(G,\mathbb{C}^\times)$ consisting of classes of symmetric cocycles. The following is a part of proposition $3.4$ of \cite{Cq} : 
\begin{proposition}\label{cor}
For $G$ finite the derived subgroup of $A(G)$ is isomorphic to the derived subgroup of $G$ if and only if $H^2_S(G,\mathbb{C}^\times)=0$.
\end{proposition}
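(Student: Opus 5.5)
The plan is to study the natural surjection $\pi: A(G)\to G$, $e_g\mapsto g$, and its restriction to derived subgroups. First, the relations $e_ge_he_g^{-1}=e_{ghg^{-1}}$ show that each $e_g$ acts on the generating set by conjugation through $\pi(e_g)=g$. Hence, for $x\in\ker\pi$, conjugation by $x$ fixes every generator. So $\ker \pi$ is central in $A(G)$, and $A(G)$ is a central extension of $G$.

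Next, I describe the abelianization. $A(G)^{ab}$ is free abelian on the conjugacy classes, namely $\mathbb{Z}^{c_G}$. The map $A(G)\to G\times\mathbb{Z}^{c_G}$ is injective on $\ker\pi$ modulo $K:=\ker\pi\cap A(G)'$, and $K$ is exactly the kernel of $\pi|_{A(G)'}:A(G)'\to G'$. Since $\pi$ is onto, this restriction is surjective. For $G$ finite, $A(G)'$ is finitely generated: the centre has finite index (it contains $e_g^{|G|}$ for all $g$), so Schur's theorem makes $A(G)'$ finite. Therefore $A(G)'\cong G'$ if and only if $K=0$, by cardinality.

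The key step is to identify $K$ with the dual of $H^2_S(G,\mathbb{C}^\times)$, or at least to show $K=0\iff H^2_S=0$. I would use the central extension
\[
1\to \ker\pi\to A(G)\to G\to 1.
\]
A class $[\alpha]\in H^2(G,\mathbb{C}^\times)$ inflates to $A(G)$, and I claim it becomes trivial there exactly when $\alpha$ is cohomologous to a symmetric cocycle. Indeed, the defining relations say that the lifts $e_g$ of commuting elements satisfy $e_ge_h=e_he_g$. So the commutator pairing of any extension pulled back along $\pi$ vanishes on commuting pairs of canonical lifts, which matches the symmetric condition. Concretely, via the Hochschild--Serre / universal coefficient sequence for the central extension, I get the exact sequence
\[
\mathrm{Hom}(A(G),\mathbb{C}^\times)\to\mathrm{Hom}(\ker\pi,\mathbb{C}^\times)\to H^2(G,\mathbb{C}^\times)\to H^2(A(G),\mathbb{C}^\times).
\]
The image of the transgression is the set of classes killed by inflation. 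Characters of $\ker\pi$ that do not extend to $A(G)$ are detected exactly by $K=\ker\pi\cap A(G)'$, using divisibility of $\mathbb{C}^\times$. This gives $\mathrm{Hom}(K,\mathbb{C}^\times)\cong \ker(\inf)$.

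It remains to show $\ker(\inf)=H^2_S(G,\mathbb{C}^\times)$. For one inclusion, given a character $\chi$ of $\ker\pi$, I form the pushout extension. Choosing the section $g\mapsto e_g$ gives a cocycle $\alpha(g,h)=\chi(e_ge_he_{gh}^{-1})$. Symmetry follows from $e_ge_he_g^{-1}=e_{ghg^{-1}}$, which gives $e_ge_h=e_{ghg^{-1}}e_g$. Hence $e_ge_he_{gh}^{-1}$ and $e_he_ge_{hg}^{-1}$ must be compared; I would conjugate by a central element to equate them, adjusting by a coboundary if needed. For the converse, given a symmetric $\alpha$, I define a map $A(G)\to \mathbb{C}^\times\times_\alpha G$, $e_g\mapsto(1,g)$. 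The relation is respected precisely because symmetry yields $(1,g)(1,h)(1,g)^{-1}=(1,ghg^{-1})$; this uses the cocycle identity together with $\alpha(g,h)=\alpha(h,g)$. This homomorphism splits the inflated extension, so $\inf[\alpha]=0$.

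I expect the main obstacle to be this converse verification, and more generally making symmetry precisely match the conjugation relations up to coboundary. Once that is settled, $K=0\iff H^2_S=0$, and the proposition follows.
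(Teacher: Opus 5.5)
The paper gives no argument of its own here: the statement is quoted from Proposition 3.4 of \cite{Cq}. Your proposal therefore has to stand alone as a proof, and it does; the steps are correct.

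\begin{itemize}
\item $\ker\pi$ is central, so $[A(G):Z(A(G))]\le |G|$. It is this, not the elements $e_g^{|G|}$, that gives the finite index.
\item Schur's theorem then makes $A(G)'$ finite, so $A(G)'\cong G'$ holds iff $K=\ker\pi\cap A(G)'$ is trivial.
\item The five-term sequence, together with divisibility of $\mathbb{C}^\times$, gives $\ker(\mathrm{inf})\cong \mathrm{Hom}(K,\mathbb{C}^\times)$.
\item The identification $\ker(\mathrm{inf})=H^2_S(G,\mathbb{C}^\times)$ finishes the argument.
\end{itemize}

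That last identification is the heart of the matter, and you only sketch it. Both inclusions close in one line each.

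\emph{Transgression classes are symmetric.} For $f(g,h)=e_ge_he_{gh}^{-1}\in\ker\pi$, do not ``conjugate by a central element''. Instead, conjugate $e_ge_h=f(g,h)e_{gh}$ by $e_g^{-1}$ and use that $f(g,h)$ is central. This gives $e_he_g=f(g,h)\,e_g^{-1}e_{gh}e_g=f(g,h)\,e_{hg}$. Hence $f(h,g)=f(g,h)$ exactly, and $\chi\circ f$ is symmetric with no coboundary correction.

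\emph{Symmetric classes die under inflation.} The map $e_g\mapsto(1,g)$ respects the relations iff $\alpha(g,h)=\alpha(ghg^{-1},g)$ for all $g,h$. The cocycle identity at $(g,hg^{-1},g)$ reads $\alpha(g,hg^{-1})\,\alpha(ghg^{-1},g)=\alpha(hg^{-1},g)\,\alpha(g,h)$, and symmetry cancels the outer factors.

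So symmetry of $\alpha$ is exactly conjugation-equivariance of the section $g\mapsto(1,g)$. Your opening heuristic, which only uses commuting pairs, would capture the larger group $B_0(G)$, not $H^2_S(G,\mathbb{C}^\times)$.

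Compared with the bare citation, your route yields a quantitative refinement:
\[
\ker(\pi|_{A(G)'})\cong\mathrm{Hom}(H^2_S(G,\mathbb{C}^\times),\mathbb{C}^\times),
\qquad\text{hence}\qquad |A(G)'|=|G'|\cdot|H^2_S(G,\mathbb{C}^\times)|.
\]
With this, the paper's GAP bounds $|G'|=8$ and $|A(G)'|\ge 16$ directly give $|H^2_S(G,\mathbb{C}^\times)|\ge 2$.
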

 
From now on, $G$ \textbf{is the group corresponding to} $SmallGroup(64,149)$ of GAP.  
\begin{proposition}
The derived group of $G$ has order $8$.
\end{proposition}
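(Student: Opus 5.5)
The plan is to verify the statement computationally in GAP, since $G$ is defined only as the library group $SmallGroup(64,149)$. The most direct route is to construct $G$ with \texttt{G:=SmallGroup(64,149);}, compute its commutator subgroup with \texttt{D:=DerivedSubgroup(G);}, and read off \texttt{Size(D)}, expecting the value $8$. GAP computes the derived subgroup of a finite polycyclic (pc) group exactly, as the normal closure of the commutators of the generators. So this is a rigorous, certified computation, not a probabilistic one.

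To make the argument more transparent, and less dependent on trusting one black-box call, I would also give a more structural check. First, obtain a pc presentation of $G$ via \texttt{Pcgs(G)}, or a power-commutator presentation via \texttt{IsomorphismFpGroup(G)}, with generators $g_1,\dots,g_6$. Next, compute all commutators $[g_i,g_j]$. Then use that $G'$ is the normal closure of these commutators. Since $G$ is a $2$-group, $G'$ is contained in the Frattini subgroup. It should be possible to see directly that the commutators generate a normal subgroup of order $8$.

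As an independent consistency check, compute the abelianization $G/G'$ with \texttt{AbelianInvariants(G)}; its order must be $64/8=8$. One can also cross-check with the lower central series: \texttt{LowerCentralSeries(G)} has $G'$ as its second term.

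The only real obstacle is not mathematical. It is making sure that the group labelled $149$ in the SmallGroups library is the intended one, and that the GAP version does not change the library ordering. The library ordering for order $64$ has been stable for a long time. For robustness, I would record identifying invariants of $G$ alongside the derived subgroup order, such as the abelian invariants, the number of conjugacy classes, and the nilpotency class. This fixes the isomorphism type independently of the library index, and it is also the data needed later when the paper compares $G'$ with the derived subgroup of $A(G)$ via Proposition~\ref{cor}.
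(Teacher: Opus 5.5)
Your proposal is correct and matches the paper's proof, which simply runs \texttt{SmallGroup(64,149)}, \texttt{DerivedSubgroup}, and \texttt{Size} in GAP and reads off $8$. Your cross-checks are sound extras the paper does not include: the abelian invariants giving $|G/G'|=8$, the second term of the lower central series, and recording invariants to pin down the isomorphism type independently of the library index.
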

\begin{proof}
We use the code :\\\\
G := SmallGroup(64,149);\\
dG:=DerivedSubgroup(G);\\
a:=Size(dG);\\\\
GAP returns $8$ for the value of $a$.
\end{proof}
We implement the group $A(G)$ in GAP by adding to the code in the previous proposition the lines :\\\\
elms := Elements(G);\\
r := Length(elms);\\
F := FreeGroup(r); \\
gens := GeneratorsOfGroup(F);\\
rels := [\:];\\
for i in [1..r] do\\
for j in [1..r] do\\
        h := elms[j];\\
        po:=elms[i]*elms[j]*elms[i] $\:\widehat{\:}\:$-1 ;\\
        k := Position(elms, po);\\
        Add(rels, gens[i] * gens[j] * gens[i]$\:\widehat{\:}\:$-1*gens[k]$\:\widehat{\:}\:$-1);\\
	    od;\\
od;\\
H := F / rels;\\\\
Hence the group $A(G)$ correspond to the group $H$ in the code.
\begin{proposition}
The derived group of $A(G)$ has order greater then $16$.
\end{proposition}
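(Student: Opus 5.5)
The plan is to let GAP do the group theory, building on the presentation $H=F/\mathrm{rels}$ of $A(G)$ from the code above. The derived subgroup of $A(G)$ is a finitely generated group that we want to show has more than $16$ elements. One caveat: it may be infinite, since $A(G)$ surjects onto $\mathbb{Z}^{c_G}$ and is infinite. So we will not ask GAP for \texttt{Size(DerivedSubgroup(H))} directly on the infinite finitely presented group; coset enumeration would not terminate. Instead we exhibit a finite quotient of $A(G)$ whose derived subgroup already has more than $16$ elements. Any surjection $A(G)\to Q$ maps $[A(G),A(G)]$ onto $[Q,Q]$, so $|[A(G),A(G)]|\geq |[Q,Q]|$.

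To obtain such a finite quotient, we use the central extension structure of $A(G)$. For every $g$ the element $e_g^{\,\mathrm{ord}(g)}$ is central in $A(G)$. We therefore add the relations $e_g^{N}=1$ for all $g\in G$, where $N$ is a fixed exponent. A natural first choice is $N=\exp(G)$, or a small multiple such as $2\exp(G)$ if the first choice collapses too much. This gives a finitely presented group $\bar H$, which is a quotient of $H$ and hence of $A(G)$. The key point is that $\bar H$ should be finite and small enough to enumerate: it is a central extension of $G$ by a finite abelian group, since the kernel of $\bar H\to G$ is generated by the central elements $e_ge_h e_{gh}^{-1}$ modulo torsion. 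The GAP lines are then: append \texttt{gens[i]\^{}N} to \texttt{rels}, form $\bar H$, use \texttt{IsomorphismPcGroup} or \texttt{Image(IsomorphismPermGroup(...))} to get a concrete finite group, and compute \texttt{Size(DerivedSubgroup(...))}. We expect an answer of at least $16$, and in fact $>16$; the statement's bound suggests a value such as $32$.

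Before running this, we simplify the presentation with \texttt{SimplifiedFpGroup} to make the enumeration feasible, since there are $64$ generators and $4096$ relations. Alternatively, we can cut down to generators $e_g$ for $g$ ranging over a generating set of $G$ together with representatives needed to express the rest. An alternative route, avoiding finiteness issues, is to compute \texttt{LowIndexSubgroups} or an \texttt{EpimorphismNilpotentQuotient} of $H$ of class $2$ or $3$ (via the \texttt{nq} package). The nilpotent quotient captures the derived subgroup up to the relevant class because $G$ is a $2$-group. We then read off the size of the derived subgroup of that quotient.

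The main obstacle is choosing a quotient that is simultaneously finite and computable, and large enough not to kill the extra commutators detecting $H^2_S(G,\mathbb{C}^\times)$. Choosing $N$ too small could identify $[A(G),A(G)]$ with $[G,G]$ of order $8$. We would try $N=\exp(G)$ first, and if the derived subgroup obtained has order only $8$, pass to $N=2\exp(G)$ or to the nilpotent quotient approach. Any quotient showing a derived subgroup of order exceeding $16$ proves the proposition. Combined with the previous proposition and Proposition~\ref{cor}, this yields $H^2_S(G,\mathbb{C}^\times)\neq 0$.
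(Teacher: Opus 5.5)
Your plan has a problem with its target and a problem with its main computation.

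\textbf{The target.} You read ``greater then 16'' as a strict inequality, expect a value such as $32$, and take as your success criterion a quotient whose derived subgroup has order exceeding $16$. That criterion can never be met, because $|[A(G),A(G)]|=16$ exactly. Here is why. $K=\ker(A(G)\to G)$ is central. The five-term exact sequence identifies $K\cap[A(G),A(G)]$ with the cokernel of $H_2(A(G))\to H_2(G)$, and the dual of that cokernel is the kernel of inflation $H^2(G,\mathbb{C}^\times)\to H^2(A(G),\mathbb{C}^\times)$. Suppose $\alpha$ inflates to a coboundary. Then the projection $A(G)\to G$ lifts to a homomorphism into the central extension $E_\alpha$, and the images of the $e_g$ satisfy the conjugation relations. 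So lifts of commuting elements commute, i.e.\ $\alpha(g,h)=\alpha(h,g)$ whenever $gh=hg$, which means $[\alpha]\in B_0(G)$. For this group $B_0(G)\cong\mathbb{Z}/2$ (Chu--Hu--Kang--Kunyavskii), hence $|[A(G),A(G)]|\le 8\cdot 2=16$. The paper's statement means ``at least $16$'': its proof exhibits a $2$-quotient of $p$-class $3$ whose derived subgroup has order $16$. Proposition \ref{cor} only needs $|[A(G),A(G)]|\neq 8$. So an output of $16$ is the successful outcome, not a failure.

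\textbf{The computation.} Adding $e_g^N=1$ with $N=\exp(G)$ is mathematically sound, and in fact lossless. Conjugate elements $e_g^N$ coincide, so these central elements generate a group with $c_G$ generators mapping onto $N\mathbb{Z}^{c_G}$ inside $A(G)^{ab}\cong\mathbb{Z}^{c_G}$. That group is therefore free abelian and meets $[A(G),A(G)]$ trivially, giving $[\bar H,\bar H]\cong[A(G),A(G)]$. Your worry about collapse to order $8$, and the fallback $N=2\exp(G)$, are therefore unnecessary.

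The catch is the size: $|\bar H|=N^{c_G}\,|[A(G),A(G)]|$. Here $N\ge 4$, and $c_G\ge 13$ (eight linear characters, the remaining $56$ made up of squares of degrees at most $4$), so $|\bar H|\ge 2^{30}$. Coset enumeration, IsomorphismPermGroup and similar tools will not finish on this, simplified presentation or not.

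The workable route is a quotient algorithm applied to $H$ itself, with no power relations needed. Your nq fallback is legitimate, since $A(G)$ is a central extension of a $2$-group and hence nilpotent. It is essentially the paper's proof, which uses EpimorphismPGroup(H,2,3).
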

\begin{proof} 
We find using the following code lines :\\\\
f := EpimorphismPGroup(H,2,3);\\
K:= Image(f);\\
dK:=DerivedSubgroup(K);\\
n:=Size(dK);\\\\
that there is an eprimorphism from $A(G)$ to a $2$-group  $K$ of class $3$ and that the derived subgroup of $K$ is of order $n=16$. Since the derived group of a quotient of $A(G)$ has order $16$, the derived group of $A(G)$ has order greater then $16$
\end{proof}
 
\begin{proposition}
For $G$ corresponding to $SmallGroup(64,149)$ of GAP, $H^2_S(G,\mathbb{C}^\times)\neq 0$.
\end{proposition}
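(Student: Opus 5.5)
The statement follows from Proposition \ref{cor} combined with the two order computations. We argue by contraposition: assume $H^2_S(G,\mathbb{C}^\times)=0$. Since $G$ is finite, Proposition \ref{cor} then says that the derived subgroup $A(G)'$ of $A(G)$ is isomorphic to the derived subgroup $G'$ of $G$. In particular the two groups have the same cardinality, so $|A(G)'|=8$ by the proposition computing $|G'|$.

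Next we bring in the order of $A(G)'$. The GAP computation gives an epimorphism $f\colon A(G)\to K$ onto a $2$-group $K$ of class $3$ with $|K'|=16$. A surjective homomorphism maps commutators onto commutators, so $f(A(G)')=K'$. Hence $K'$ is a quotient of $A(G)'$ and $|A(G)'|\geq |K'|=16$. This is exactly the content of the preceding proposition, which we simply invoke.

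Combining the two steps gives $8=|A(G)'|\geq 16$, a contradiction. Therefore $H^2_S(G,\mathbb{C}^\times)\neq 0$, so some symmetric $2$-cocycle on $SmallGroup(64,149)$ is not a coboundary.

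The only real subtlety is rigor rather than mathematics. The argument depends on two GAP computations being correct. The first is the order of $G'$, a finite computation. The second is the existence of the quotient $K$: \texttt{EpimorphismPGroup} returns a genuine epimorphism from the finitely presented group $H$, which is exactly $A(G)$ with its defining relations. So the lower bound $|A(G)'|\geq 16$ is certified without ever needing to know whether $A(G)'$ is finite. This matters because Proposition \ref{cor} as stated requires only that $G$ be finite. One could worry that $A(G)$ is infinite, but that is harmless: an isomorphism with the finite group $G'$ would force $A(G)'$ to be finite of order $8$, and the bound $|A(G)'|\geq 16$ excludes that regardless.
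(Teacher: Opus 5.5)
Your proposal is correct and follows the same route as the paper: assuming $H^2_S(G,\mathbb{C}^\times)=0$, Proposition \ref{cor} gives $A(G)'\cong G'$. This contradicts $|G'|=8$ together with $|A(G)'|\geq 16$, which comes from the epimorphism onto $K$ with $|K'|=16$. Your extra remarks, that $f(A(G)')=K'$ and that finiteness of $A(G)'$ is not needed, only spell out details the paper leaves implicit.
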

\begin{proof}
This follows from proposition \ref{cor} and the last two propositions.
\end{proof}
 The author has no prior experience with GAP. He used the help of Google's AI.

 \end{document}